\let\c@table\c@figure
\newtheorem{theorem}{Theorem}
\newtheorem{proposition}[theorem]{Proposition}
\newtheorem{lemma}[theorem]{Lemma}
\theoremstyle{definition}
\theoremstyle{remark}
\newtheorem{remark}[theorem]{Remark}
\newcommand{\newword}[1]{\textbf{#1}}
\newcommand{\Z}{\mathbb{Z}}
\newcommand{\N}{\mathbb{N}}
\newcommand{\C}{\mathfrak{C}}
\newcommand{\T}{\mathcal{T}}
\DeclareMathOperator{\Homeo}{Homeo}
\DeclareMathOperator{\Stab}{Stab}
\newcommand{\Aut}{\mathrm{Aut}}
\newcommand{\Fix}{\mathrm{Fix}}
\title{Boone--Higman Embeddings for Contracting Self-Similar Groups}
\author{James Belk}
\address{School of Mathematics \& Statistics, University of Glasgow, Glasgow, G12~8QQ, Scotland.}
\email{\href{mailto:jim.belk@glasgow.ac.uk}{jim.belk@glasgow.ac.uk}}
\author{Francesco Matucci}
\address{Dipartimento di Matematica e Applicazioni, Universit\`{a} degli Studi di Milano--Bicocca, Milan 20125, Italy.}
\email{\href{mailto:francesco.matucci@unimib.it}{francesco.matucci@unimib.it}}
\thanks{The third author is a member of the Gruppo Nazionale per le Strutture Algebriche, Geometriche e le loro Applicazioni (GNSAGA) of the Istituto Nazionale di Alta Matematica (INdAM) and gratefully acknowledges the support of the 
of the Universit\`a degli Studi di Milano--Bicocca
(FA project 2021-ATE-0033 ``Strutture Algebriche'').
}
\date{}  
\begin{document}

\maketitle

\begin{abstract}We give a short proof that every contracting self-similar group embeds into a finitely presented simple group.   In particular, any contracting self-similar group embeds into the corresponding R\"over--Nekrashevych group, and this in turn embeds into one of the twisted Brin--Thompson groups introduced by the first author and Matthew Zaremsky.  The proof here is a simplification of a more general argument given by the authors, Collin Bleak, and Matthew Zaremsky for contracting rational similarity groups.
\end{abstract}

\section{Introduction}

In 1973, William Boone and Graham Higman conjectured that a finitely generated group has solvable word problem if and only if it embeds into a finitely presented simple group~\cite{Boone,BoHi}.  Elizabeth Scott proved in 1984 that $\mathrm{GL}_n(\Z)$ embeds into a finitely presented simple group for all $n\geq 2$~\cite{Scott2}, and recent work has demonstrated Boone--Higman embeddings for all countable abelian groups~\cite{BHM2} and for all hyperbolic groups~\cite{BBMZ}.  See \cite{BBMZSurvey} for a survey of related results.

In 1999, Claas R\"over described an embedding of Grigorchuk's group into a finitely presented simple group~\cite{Rov}.  Specifically, the action of Grigorchuk's group $\mathcal{G}$ on the infinite rooted binary tree induces an action of $\mathcal{G}$ on the boundary Cantor set $\C_2=\{0,1\}^\omega$, and R\"over proved that the group $V\mathcal{G}$ of homeomorphisms of $\C_2$ generated by $\mathcal{G}$ and Thompson's group $V$ is finitely presented and simple.  Note that Grigorchuk's group $\mathcal{G}$ is not itself finitely presented~\cite{Grig}, which makes this result particularly surprising. 

Grigorchuk's group is the archetypical example of a self-similar group, as defined by Volodomyr Nekrashevych in~\cite{NekBook}.  Unfortunately, R\"over's proof does not extend easily to other self-similar groups.  If $G$ is a self-similar group acting on an infinite rooted $d$-ary tree $\mathcal{T}_d$, then $G$ can be viewed as a group of homeomorphisms of the Cantor set $\mathfrak{C}_d=\{0,\ldots,d-1\}^\omega$, and one can construct a R\"over--Nekrashevych group $V_dG$ analogous to R\"over's group~\cite{Scott,Nek}.  Unfortunately, $V_dG$ is neither finitely presented nor simple in general, though in some cases this construction gives new finitely presented simple groups.  For example, Rachel Skipper, Stefan Witzel, and Matthew Zaremsky used R\"over--Nekrashevych groups to construct the first family of simple groups with arbitrary finiteness lengths~\cite{SWZ}.

In 2017, Nekrashevych proved that $V_d G$ is indeed finitely presented whenever the self-similar group $G$ is contracting~\cite{Nek}.   Contracting groups are arguably the most important class of self-similar groups, and play a central role in the theory of iterated monodromy groups and limit spaces developed by Nekrashevych~\cite{NekBook}.  In~\cite{BBMZ}, the authors together with Collin Bleak and Matthew Zaremsky generalized Nekrashevych's result to arbitrary full, contracting rational similarity groups, which act on subshifts of finite type by asynchronous automata.  It was also proven in~\cite{BBMZ} that any contracting rational similarity group embeds into a finitely presented simple group, yielding Boone--Higman embeddings for contracting self-similar groups as well as all hyperbolic groups.

In this note we give a short proof that contracting self-similar groups admit Boone--Higman embeddings.  We use only Nekrashevych's finite presentation result, and we do not require the more general theory of rational similarity groups.  Our main theorem is the following.

\begin{theorem}\label{thm:MainTheorem}
Every contracting self-similar group embeds into a finitely presented simple group.
\end{theorem}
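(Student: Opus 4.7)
The plan is to prove Theorem~\ref{thm:MainTheorem} in two stages, exactly as outlined in the abstract. Fix a contracting self-similar group $G$ acting on the $d$-ary rooted tree $\mathcal{T}_d$, and hence on the boundary Cantor set $\C_d$. The first stage is essentially tautological: by construction the R\"over--Nekrashevych group $V_dG\leq\Homeo(\C_d)$ contains $G$, and by Nekrashevych's theorem~\cite{Nek} it is finitely presented because $G$ is contracting. The task therefore reduces to embedding the finitely presented group $V_dG$ into a finitely presented simple group.

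For the second stage I would invoke the twisted Brin--Thompson construction $SV_H$ of the first author and Zaremsky. Recall that for a faithful action of a group $H$ on a set $S$, the group $SV_H$ is a group of homeomorphisms of $\C^S$ containing a diagonal copy of $H$; the key facts for us are that $SV_H$ is simple whenever the action of $H$ on $S$ is faithful, and $SV_H$ is finitely presented whenever $H$ is finitely presented and the action possesses a \emph{demonstrative element}, i.e.\ an $h\in H$ together with a finite $F\subseteq S$ satisfying $hF\cap F=\varnothing$.

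My plan is to take $H=V_dG$ with its natural action on $S=\C_d$ (or, if a countable set is required, on a suitable dense $V_dG$-invariant subset thereof). A demonstrative element is then easy to produce: any nontrivial element of the $d$-ary Thompson group $V_d\leq V_dG$ maps some cylinder $\mathbf{u}\C_d$ strictly off itself, and any finite subset of that cylinder is accordingly moved off itself. Combined with Nekrashevych's finite presentation of $V_dG$, the Belk--Zaremsky theorems immediately yield a finitely presented simple group $SV_{V_dG}$, and the chain $G\hookrightarrow V_dG\hookrightarrow SV_{V_dG}$ completes the proof. The main technical point to verify will be matching the precise form of the demonstrativity hypothesis in the Belk--Zaremsky finite presentation theorem against the concrete action above; everything else is a formal concatenation of the two main ingredients.
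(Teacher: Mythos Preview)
Your outline has the right architecture --- embed $G$ into $V_dG$, use Nekrashevych's finite presentation, then pass to a twisted Brin--Thompson group --- but the hypothesis you quote for finite presentability of $SV_H$ is not correct, and this is where the real content of the argument lies. The existence of a ``demonstrative element'' in your sense (some $h$ moving a finite set off itself) is satisfied by essentially any nontrivial action and cannot possibly suffice: already for $H$ trivial and $S$ infinite the untwisted group $SV$ is not even finitely generated. The criterion actually used (from Zaremsky~\cite{ZaremskyTaste}, refining~\cite{BZ}) requires that $H$ be finitely presented, that the action of $H$ on $S$ be \emph{oligomorphic} (finitely many orbits on $S^n$ for every $n$), and that the stabilizer in $H$ of every finite subset of $S$ be finitely generated. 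None of this is automatic.

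Consequently the choice of $S$ is not a technicality you can defer. Taking $S=\C_d$ (uncountable) or an arbitrary dense invariant subset will not work; the paper takes $S$ to be a single $V_dG$-orbit of a \emph{rational} point $\alpha\overline{\beta}$. Oligomorphicity then follows from a transitivity argument using the flexibility of $V_d$ (Lemma~\ref{lem:MakeElements} and Proposition~\ref{thm:oligo}). The stabilizer condition is the heart of the matter and is where the contracting hypothesis is actually used: one shows that the group of germs $(V_dG)_p$ at a rational point is virtually cyclic (Lemma~\ref{lem:VirtuallyCyclic}), which needs the finite nucleus, and then bootstraps this to finite generation of pointwise stabilizers via an exhaustion argument and an identification of a suitable fixator with $V_dG$ itself. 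Your proposal skips exactly this work by invoking a nonexistent shortcut; as written it does not constitute a proof.
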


The strategy is to prove that the associated R\"over--Nekrashevych group embeds into one of the twisted Brin--Thompson groups introduced by the first author and Matthew Zaremsky in 2022~\cite{BZ}.  Given any faithful action of a group $H$ on a set~$S$, there is an associated twisted Brin--Thompson group $SV_H$ which is simple and has $H$ as a subgroup. 
 In a recent article~\cite{ZaremskyTaste}, Zaremsky proved that $SV_H$ is finitely presented as long as the following conditions are satisfied:
\begin{enumerate}
\item $H$ is finitely presented,\smallskip
\item $H$ acts oligomorphically on $S$, i.e.\ the induced action of $H$ on $S^n$ has finitely many orbits for all $n\geq 1$.\smallskip
\item The stabilizer of any finite subset of $S$ is finitely generated.
\end{enumerate}
Here we prove that if $G$ is a contracting self-similar group, then the action of the R\"over--Nekrashevych group $H = V_d G$ on any orbit $S$ of rational points in $\C_d$ satisfies conditions (2) and (3) above.  Combining this with Nekrashevych's finite presentation result, it follows that the twisted Brin--Thompson group $SV_H$ is a finitely presented simple group that contains $H$ and hence $G$.

Incidentally, Scott proved in \cite{Scott} that if a self-similar group $G\leq \Aut(\T_d)$ is itself finitely presented, then so is the associated R\"over--Nekrashevych group~$V_dG$.  Zaremsky has recently proven that all such groups embed into finitely presented simple groups~\cite{ZaremskyFP}, and his proof also works for contracting self-similar groups that are finitely generated.

\subsection*{Acknowledgments}

The authors would like to thank Collin Bleak, James Hyde and Matthew Zaremsky for many helpful discussions.

\section{The Proof}

For $d\geq 2$, let $X_d=\{0,\ldots,d-1\}$, and let  $\mathcal{T}_d$ be the infinite, rooted $d$-ary tree whose nodes are elements of $X_d^*$.  If $g\in \Aut(\T_d)$ and $\alpha\in X_d^*$, there exists a unique automorphism $g|_{\alpha}\in \Aut(\T_d)$ such that
\[
g(\alpha\cdot\beta) = g(\alpha)\cdot g|_{\alpha}(\beta)
\]
for every $\beta\in X_d^*$, where $\cdot$ denotes concatenation.  This automorphism $g|_\alpha$ is known as the \newword{local action} (or ``restriction'') of $g$ at $\alpha$.  A group $G\leq \Aut(\T_d)$ is \newword{self-similar} if $g|_\alpha\in G$ for all $g\in G$ and $\alpha\in X_d^*$.  A self-similar group $G$ is \newword{contracting} if there exists a finite set $\mathcal{N}\subseteq G$, known as the \newword{nucleus} of $G$, such that for every $g\in G$ the set
\[
\{\alpha \in X_d^* \mid g|_\alpha\notin \mathcal{N}\}
\]
is finite.  Equivalently, for every $g\in G$ there exists an $M\geq 0$ so that $g|_\alpha\in\mathcal{N}$ for all words $\alpha\in X_d^*$ of length $M$ or greater.

Now let $\C_d$ denote the Cantor space $\C_d=X_d^\omega$, and note that $\Aut(\T_d)$ acts faithfully on $\C_d$ by homeomorphisms.  For each $\alpha\in X_d^*$, the corresponding \newword{cone} in $\C_d$ is the set $\alpha\C_d$ of all infinite $d$-ary sequences that have $\alpha$ as a prefix.  These cones form a basis for the topology on~$\C_d$.  Indeed, every open set is a disjoint union of cones, and a set is clopen if and only if it is a finite disjoint union of cones.

If $h\in \Homeo(\C_d)$ maps a cone $\alpha\C_d$ to a cone $\beta\C_d$, the corresponding \newword{local action} (or ``restriction'') of $h$ is the homeomorphism $h|_\alpha$ of $\C_d$ defined by
\[
h(\alpha\cdot \psi)=\beta\cdot h|_\alpha(\psi)
\]
for all $\psi\in X_d^\omega$.  This agrees with the earlier definition of a local action in the case where $h\in \Aut(\T_d)$, and has the property that $h|_{\alpha\beta} = (h|_\alpha)|_\beta$ whenever $h|_\alpha$ is defined. 
 If $G\leq \Aut(\T_d)$ is a self-similar group, a homeomorphism $h\in \Homeo(\C_d)$ lies in the \newword{R\"over--Nekrashevych group $\boldsymbol{V_dG}$} if for every point $p\in \C_d$ there exists a cone $\alpha\C_d$ containing $p$ such that $h(\alpha\C_d)$ is a cone and $h|_\alpha\in G$. We shall refer to such a cone $\alpha\C_d$ as a \newword{regular cone} for~$h$.

We begin by verifying condition (2) given in the introduction.

\begin{lemma}\label{lem:MakeElements}
Let $G\leq \Aut(\T_d)$, let $g_1,\ldots,g_n\in G$, and let  $\alpha_1\C_d,\ldots,\alpha_n\C_d$ and $\beta_1\C_d,\ldots,\beta_n\C_d$ be two collections of cones in $\C_d$, where each collection is pairwise disjoint and does not cover~$\C_d$. Then there exists an $h\in V_d G$ such that $h(\alpha_i\C_d)=\beta_i\C_d$ for each~$i$, with $h|_{\alpha_i}= g_i$.
\end{lemma}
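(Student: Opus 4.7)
The plan is to construct $h$ piecewise: on each cone $\alpha_i\C_d$ we will use the formula dictated by the requirements, and on the complement we will send cones to cones by homeomorphisms with trivial local action. Specifically, set $U=\C_d\setminus\bigsqcup_{i=1}^n \alpha_i\C_d$ and $V=\C_d\setminus\bigsqcup_{i=1}^n \beta_i\C_d$. Since neither cone collection covers $\C_d$, both $U$ and $V$ are nonempty clopen sets, and each can be written as a finite disjoint union of cones.

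Write $U=\bigsqcup_{j=1}^k \gamma_j\C_d$ and $V=\bigsqcup_{j=1}^\ell \delta_j\C_d$. The key step is to arrange that $k=\ell$. For this I use the observation that subdividing one cone $\gamma\C_d$ into its $d$ children replaces one cone with $d$, changing the total count by $+(d-1)$; hence the cone-count of a fixed clopen subset of $\C_d$ is well-defined modulo $d-1$. Counting the two partitions $\C_d=\bigsqcup_i\alpha_i\C_d\sqcup U=\bigsqcup_i\beta_i\C_d\sqcup V$, and noting that $\C_d$ itself is a single cone, gives $n+k\equiv 1\equiv n+\ell\pmod{d-1}$, so $k\equiv \ell\pmod{d-1}$. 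After possibly relabeling, assume $k\le \ell$ and subdivide cones in the $U$-decomposition $(\ell-k)/(d-1)$ times to obtain a new decomposition with exactly $\ell$ cones. Relabel so that $U=\bigsqcup_{j=1}^m\gamma_j\C_d$ and $V=\bigsqcup_{j=1}^m\delta_j\C_d$ with the same $m$.

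Now define $h\colon\C_d\to\C_d$ by
\[
h(\alpha_i\cdot\psi)=\beta_i\cdot g_i(\psi),\qquad h(\gamma_j\cdot\psi)=\delta_j\cdot\psi,
\]
for all $\psi\in\C_d$, all $1\le i\le n$, and all $1\le j\le m$. Each piece is a homeomorphism between clopen sets, and the pieces partition $\C_d$ on both the source and the target, so $h$ is a homeomorphism of $\C_d$. By construction $h(\alpha_i\C_d)=\beta_i\C_d$ with $h|_{\alpha_i}=g_i$, as required. To verify $h\in V_dG$, note that every point $p\in\C_d$ lies either in some $\alpha_i\C_d$, which is a regular cone with local action $g_i\in G$, or in some $\gamma_j\C_d$, which is a regular cone with local action equal to the identity of $G$.

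The only real step of substance is matching the cone counts of $U$ and $V$; this is where the arithmetic invariant modulo $d-1$ comes in, but it is automatically satisfied here because $U$ and $V$ are the complements of two $n$-cone partial partitions of the same space $\C_d$. Everything else is routine gluing of piecewise cone maps.
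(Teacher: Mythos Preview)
Your proof is correct and follows essentially the same approach as the paper: partition the complements of the two cone collections into cones, use the $\bmod\ (d-1)$ invariant to equalize the counts via subdivision, and then glue the prescribed maps on the $\alpha_i\C_d$ with identity local actions on the complementary cones. Your write-up is slightly more explicit about why the invariant holds and about verifying membership in $V_dG$, but the argument is the same.
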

\begin{proof}
Note that any partition of $\C_d$ onto cones $k$ cones must satisfy \[
k\equiv 1\pmod{d-1}.
\]
Since $\bigcup_{i=1}^n \alpha_i\C_d$ is a clopen set, its complement is clopen, so we can find a partition $\gamma_1\C_d,\ldots,\gamma_r\C_d$ of the complement into cones.  Similarly, we can find a partition $\delta_1\C_d,\ldots,\delta_s\C_d$ of the complement of $\bigcup_{i=1}^n \beta_i\C_d$ into cones. Then $n+r\equiv n+s\equiv 1\;(\mathrm{mod}\;d-1)$, so $r\equiv s\;(\mathrm{mod}\;d-1)$.  Refining these partitions if necessary, we may therefore assume that $r=s$.  Then the element $h\in V_dG$ which maps each $\alpha_i\C_d$ to $\beta_i\C_d$ with $h|_{\alpha_i}=g_i$ and maps each $\gamma_i\C_d$ to $\delta_i\C_d$ with $h|_{\gamma_i}=\mathrm{id}$ has the desired properties.
\end{proof}

\begin{proposition}\label{thm:oligo}
If $G\leq\Aut(\T_d)$ is a self-similar group, then $V_d G$ acts oligomorphically on each of its orbits in~$\mathfrak{C}_d$.
\end{proposition}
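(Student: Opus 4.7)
The plan has two stages. First, observe that for any $h \in V_d G$ and any tuple $(p_1,\dots,p_n) \in S^n$, one has $h(p_i) = h(p_j) \iff p_i = p_j$, so the ``equality pattern'' on the indices (the equivalence relation $i \sim j \iff p_i = p_j$) is a $V_d G$-orbit invariant. Only finitely many (at most $B_n$, the Bell number) such patterns exist, so by passing to a set of distinct representatives per equivalence class, it suffices to show: for each $k \geq 1$, $V_d G$ acts transitively on the set of $k$-tuples of \emph{distinct} points of $S$.

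For that transitivity statement, take two such tuples $(p_1,\dots,p_k)$ and $(q_1,\dots,q_k)$ in $S^k$. Since $p_i$ and $q_i$ both lie in the single orbit $S$, there exists $h_i \in V_d G$ with $h_i(p_i) = q_i$. Choosing a regular cone $\alpha_i\C_d$ of $h_i$ at $p_i$, writing $h_i(\alpha_i\C_d) = \alpha'_i\C_d$ and $h_i|_{\alpha_i} = g_i \in G$, and decomposing $p_i = \alpha_i \cdot r_i$ and $q_i = \alpha'_i \cdot s_i$, I obtain the ``tail identity'' $g_i(r_i) = s_i$. A priori, the cones $\alpha_i\C_d$ (respectively $\alpha'_i\C_d$) need not be pairwise disjoint as $i$ varies, so I would refine them.

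The crucial point is that refinement is available on both sides in parallel. For any finite prefix $\eta$ of $r_i$, writing $r_i = \eta \cdot r_i'$, the self-similarity identity $g_i(\eta \cdot r_i') = g_i(\eta) \cdot g_i|_\eta(r_i')$ shows that replacing the triple $(\alpha_i, \alpha'_i, g_i)$ by $(\alpha_i\cdot\eta,\;\alpha'_i\cdot g_i(\eta),\;g_i|_\eta)$ still decomposes $p_i$ and $q_i$ as required and preserves the tail identity $g_i|_\eta(r_i') = s_i'$, where $s_i = g_i(\eta)\cdot s_i'$. By choosing the extension $\eta_i$ long enough independently for each $i$, one can simultaneously make the refined cones on both the $p$-side and the $q$-side pairwise disjoint and non-covering (the non-covering condition is automatic once the refined prefixes are long enough, since $k\,d^{-N}<1$ for large $N$).

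Once these disjointness conditions hold, Lemma~\ref{lem:MakeElements} produces a single $h \in V_d G$ with $h(\alpha_i\C_d) = \alpha'_i\C_d$ and $h|_{\alpha_i} = g_i$ for every $i$, and the tail identity forces $h(p_i) = q_i$ simultaneously for all $i$. The main obstacle is precisely this parallel-refinement step: one must update $\alpha'_i$ and $g_i$ in a coordinated fashion as $\alpha_i$ is extended, which is exactly what the self-similarity identity $g(\eta\cdot\psi)=g(\eta)\cdot g|_\eta(\psi)$ guarantees.
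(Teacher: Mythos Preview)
Your proof is correct and follows essentially the same route as the paper: reduce to transitivity on tuples of distinct points, pick individual elements $h_i$ carrying $p_i$ to $q_i$, refine the regular cones so that both the domain cones and their images are pairwise disjoint and non-covering, then invoke Lemma~\ref{lem:MakeElements}. You are simply more explicit than the paper about the equality-pattern reduction and about why refinement works (via the self-similarity identity $g|_{\alpha\eta}=(g|_\alpha)|_\eta$), whereas the paper compresses this into the phrase ``extending the $\alpha_i$'s if necessary.''
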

\begin{proof}Let $S$ be such an orbit.  It suffices to prove that for all $n\geq 1$ the group $V_dG$ acts transitively on ordered $n$-tuples of distinct elements of~$S$. 

Let $(p_1,\ldots,p_n)$ and $(q_1,\ldots,q_n)$ be tuples of distinct points in~$S$.  Since $p_i$ and $q_i$ lie in the same orbit, there exists for each $i$ an $h_i\in V_dG$ that maps $p_i$ to $q_i$.  Then each $h_i$ has a regular cone $\alpha_i\C_d$ that contains $p_i$. 
 Extending the $\alpha_i$'s if necessary, we may assume that the cones $\alpha_i\C_d$ are pairwise disjoint and do not cover $\C_d$, and that their images $h_i(\alpha_i\C_d)$ have these same two properties.  Then by Lemma~\ref{lem:MakeElements}, there exists an element $h\in V_dG$ such that $h(\alpha_i\C_d)=h_i(\alpha_i\C_d)$ for each $i$ with $h|_{\alpha_i}=h_i|_{\alpha_i}$.  Then $h(p_i)=h_i(p_i)=q_i$ for each~$i$, as desired.
\end{proof}

All that remains is condition (3) in the introduction. 
We will only be able to prove this condition under the hypothesis that $G$ is contracting and $S$ is the orbit of a rational point under $V_dG$, where a point $p\in \C_d$ is \newword{rational} if it has the form $\alpha\overline{\beta}$ for some $\alpha\in X_d^*$ and $\beta\in X_d^+$.

The proof of condition (3) requires a preliminary definition.  If $H\leq \Homeo(\C_d)$ and $p\in \C_d$, the \newword{group of germs} of $H$ at $p$ is the quotient
\[
(H)_p = \mathrm{Stab}_H(p) \bigr/ \Fix_H^0(p)
\]
where $\Fix_H^0(p)$ is the subgroup of $H$ consisting of elements that are the identity in a neighborhood of~$p$.  If $h\in \Stab_H(p)$, we let $(h)_p$ denote its image in~$(H)_p$.

\begin{lemma}\label{lem:VirtuallyCyclic}
If $G\leq \Aut(\T_d)$ is a contracting self-similar group and $p\in \C_d$ is a rational point, then $(V_dG)_p$ is  virtually cyclic.
\end{lemma}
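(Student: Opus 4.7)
The plan is to build a homomorphism $\Phi\colon \Stab_{V_dG}(p) \to \Z$ that descends to the germ group, with cyclic image and finite kernel. For $h\in\Stab_{V_dG}(p)$, pick any regular cone $\gamma\C_d$ for $h$ containing $p$ (so $\gamma$ is a prefix of $p$ and $h|_\gamma\in G$); since $h(p)=p$, the image $h(\gamma\C_d)=\gamma'\C_d$ is another cone containing $p$, and I set $\Phi(h):=|\gamma'|-|\gamma|$. Refining $\gamma$ to a longer prefix of $p$ changes $\gamma$ and $\gamma'$ by the same amount (because $h|_\gamma\in G\leq\Aut(\T_d)$ preserves word length), so $\Phi$ is well defined, and picking compatible nested regular cones for $h_1$ and $h_2$ shows $\Phi$ is a homomorphism. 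Every $h\in \Fix^0_{V_dG}(p)$ satisfies $\Phi(h)=0$, so $\Phi$ descends to $\overline{\Phi}\colon(V_dG)_p\to\Z$.

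Next I would use contraction to analyze $\ker\overline{\Phi}$. Replacing $\mathcal{N}$ by the finite set $\{g|_\alpha\colon g\in\mathcal{N},\,\alpha\in X_d^*\}$ (finite by contraction), we may assume $\mathcal{N}$ is closed under restrictions. If $\Phi(h)=0$ and $\gamma\C_d$ is a regular cone at $p$, then $h(\gamma\C_d)$ is a cone of the same depth containing $p$ and hence equals $\gamma\C_d$, so $h|_\gamma\in G$ fixes the corresponding tail of $p$. By contraction, $h|_\gamma\in\mathcal{N}$ once $\gamma$ is a sufficiently long prefix of~$p$.

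To finish I would use that $p=\alpha\overline{\beta}$ is rational. The prefixes $\gamma_n:=\alpha\beta^n$ ($n\geq 0$) are cofinal among prefixes of $p$, all have tail $\overline{\beta}$, and obey $h|_{\gamma_{n+1}}=(h|_{\gamma_n})|_\beta$. So for $h\in\ker\Phi$ and $n$ large, $h|_{\gamma_n}$ lies in the finite set
\[
\mathcal{N}_0:=\{g\in\mathcal{N}\mid g(\overline{\beta})=\overline{\beta}\},
\]
and the map $\rho\colon g\mapsto g|_\beta$ sends $\mathcal{N}_0$ into itself. Sending the germ $[h]_p$ to the class of $(n,h|_{\gamma_n})$ in the direct limit of $\mathcal{N}_0\xrightarrow{\rho}\mathcal{N}_0\xrightarrow{\rho}\cdots$ gives a well-defined injection $\ker\overline{\Phi}\hookrightarrow\varinjlim(\mathcal{N}_0,\rho)$, since two such pairs agree in the direct limit exactly when $h$ and $h'$ agree on some neighborhood $\gamma_m\C_d$ of $p$.

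Since the direct limit of a finite set under any endomap is just the eventual image $\bigcap_{k\geq 0}\rho^k(\mathcal{N}_0)$, it is finite, so $\ker\overline{\Phi}$ is finite and $(V_dG)_p$ is finite-by-cyclic, hence virtually cyclic. The main subtlety I expect is the well-definedness and homomorphism property of $\Phi$ on $\Stab_{V_dG}(p)$ (matching regular cones for $h_1$ and $h_2$ along the prefix chain of $p$); once that is in place, rationality is exactly what reduces the tower of eventual local actions to iteration of a single endomap on the finite set~$\mathcal{N}_0$.
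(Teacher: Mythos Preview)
Your argument is correct, and it reaches the same conclusion as the paper via a closely related but differently organized route.  The paper exhibits an explicit element $f\in V_dG$ (a one-sided shift near $p$), and then defines a map from $(V_dG)_p$ to the finite set $\mathcal{N}_\beta$ of $\rho$-periodic nucleus elements by sending $h$ to its eventually constant local action $h|_{\alpha\beta^{Mi}}$; the heart of the proof is a direct computation showing that any two elements in the same fiber differ by a power of $(f)_p$, which forces $\langle(f)_p\rangle$ to have finite index.  You instead first split off the cyclic part with the depth-change homomorphism $\Phi$, and then show $\ker\overline{\Phi}$ is finite by embedding it into the direct limit $\varinjlim(\mathcal{N}_0,\rho)$, which is precisely the eventual image of $\rho$.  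The two finite sets $\mathcal{N}_\beta$ and your $\bigcap_k\rho^k(\mathcal{N}_0)$ are essentially the same object, so the combinatorial core is identical; what you gain is that once you restrict to $\ker\Phi$ you have $h(\gamma_n\C_d)=\gamma_n\C_d$ on the nose, so you never have to argue (as the paper does, using primitivity of $\beta$) that the image cone has the form $\alpha\beta^j\gamma\C_d$ for a proper prefix $\gamma$ of $\beta$.  The paper's approach, on the other hand, names the finite-index cyclic subgroup explicitly rather than obtaining it abstractly from a finite-by-cyclic extension.
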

\begin{proof}
Write $p=\alpha \overline{\beta}$ for some $\alpha,\beta\in X_d^+$, where $\beta$ is not a power of a shorter word.  By Lemma~\ref{lem:MakeElements}, there exists an $f\in V_dG$ such that
\[
f(\alpha\psi) = \alpha\beta\psi
\]
for all $\psi\in X_d^\omega$.  We claim that the cyclic group $\langle(f)_p\rangle$ generated by $(f)_p$ has finite index in $(V_dG)_p$.

Let $\mathcal{N}$ be the nucleus for $G$, and consider the mapping $g\mapsto g|_\beta$ on $\mathcal{N}$.  Since $\mathcal{N}$ is finite, every element of $\mathcal{N}$ is periodic or pre-periodic under this mapping.  Let $\mathcal{N}_\beta \subseteq \mathcal{N}$ denote the set of periodic points, and let $M$ be the least common multiple of their periods.

Now, if $h$ is any element of $V_dG$ that fixes $p$, then for all sufficiently large $i$ the cone  $\alpha\beta^i\C_d$ is regular for $h$ and the local action $h|_{\alpha\beta^i}$ lies in the nucleus $\mathcal{N}$.   Indeed, $h|_{\alpha\beta^i}$ must eventually lie in $\mathcal{N}_\beta$, and therefore the sequence of local actions $h|_{\alpha\beta^{Mi}}$ is eventually constant.  This defines a function $\mathrm{Stab}_{V_dG}(p) \to \mathcal{N}_\beta$, which descends to a well-defined function $(V_dG)_p\to \mathcal{N}_\beta$. 
 We claim that each fiber lies in a single right coset of $\langle(f)_p\rangle$.

Let $h$ and $h'$ be any two elements of $V_dG$ that fix $p$, and suppose that $h|_{\alpha\beta^{Mi}}=h'|_{\alpha\beta^{Mi}}$ for some~$i$.  Let $\zeta\C_d = h(\alpha\beta^{Mi}\C_d)$ and $\zeta'\C_d = h'(\alpha\beta^{Mi}\C_d)$.  Since $h$ and $h'$ fix $p$, both these cones must contain $p$, so $\zeta$ and $\zeta'$ are finite prefixes of $\alpha\overline{\beta}$.  Increasing $i$ if necessary, we may assume that both $\zeta$ and $\zeta'$ have $\alpha$ as a prefix.  But since
\[
\alpha\overline{\beta} = h(\alpha\overline{\beta}) = \zeta\cdot h|_{\alpha\beta^{Mi}}(\overline{\beta})
\]
and
\[
\alpha\overline{\beta} = h'(\alpha\overline{\beta}) = \zeta'\cdot h'|_{\alpha\beta^{Mi}}(\overline{\beta}) = \zeta'\cdot h|_{\alpha\beta^{Mi}}(\overline{\beta}) 
\]
and $\beta$ is not a power of any shorter word, the only possibility is that $\zeta = \alpha\beta^j\gamma$ and $\zeta'=\alpha\beta^{j'}\gamma$ for some $j,j'\geq 0$ and some proper prefix $\gamma$ of $\beta$.  Without loss of generality, we may assume that $j\leq j'$.  Then $h'$ agrees with $f^{j'-j}h$ on $\alpha\beta^{Mi}\C_d$, so $(h')_p=(f)_p^{j'-j}(h)_p$, and therefore $(h)$ and $(h')$ lie in the same right coset of $\langle(f)_p\rangle$.  Thus every fiber under the mapping $(V_dG)\to \mathcal{N}_\beta$ defined above lies in a single right coset of $\langle(f)_p\rangle$, and since $\mathcal{N}_\beta$ is finite we conclude that $\langle(f)_p\rangle$ has finite index in~$(V_dG)_p$.
\end{proof}

\begin{remark}
If $V_d$ is the Higman--Thompson group, it is well-known that $(V_d)_p$ is infinite cyclic for any rational point $p$, and the lemma above shows that $(V_d)_p$ has finite index in $(V_dG)_p$.  Using the methods of the proof above, it is also easy to show that $(G)_p$ is finite for any self-similar group~$G$ and any rational point~$p$.

However, it is not true in general that $(V_dG)_p$ is generated by $(V_d)_p$ and $(G)_p$.  For example, let $G\leq \Aut(\T_2)$ be the two-element self-similar group $\{1,a\}$, where $a$ is the reflection of $\mathcal{T}_2$ that switches all $0$'s and $1$'s in an infinite binary sequence, 
and let $p$ be the point $\overline{01}$.  Then $(G)_p$ is trivial but $(V_2)_p$ has index two in $(V_2G)_p$.  In particular, if $h$ is any element of $V_2G$ such that $h(0\C_2)=01\C_2$ and $h|_0=a$, then $(h)_p$ lies in $(V_2G)_p$ but not $(V_2)_p$.
\end{remark}

The following proposition completes the proof of Theorem~\ref{thm:MainTheorem}.  It follows the arguments given by the authors and James Hyde in \cite{BHM1}, as well as some unpublished simplifications partially due to James Hyde.

\begin{proposition}
If $G\leq \Aut(\T_d)$ is a contracting self-similar group and $S$ is a finite set of rational points in~$\C_d$, then the stabilizer of $S$ in  $V_dG$ is finitely generated.
\end{proposition}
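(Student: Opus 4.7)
The plan is to exhibit a finite generating set for the stabilizer consisting of three pieces: lifts of generators of the (virtually cyclic) germ groups at the points of $S$, a ``dilator'' $f_p$ for each $p\in S$, and a finite generating set for the pointwise fixator in $V_dG$ of a clopen neighborhood of $S$. First I would reduce to the pointwise fixator $K := \bigcap_{p\in S}\Stab_{V_dG}(p)$, which has index at most $|S|!$ in $\Stab_{V_dG}(S)$ via the permutation action on $S$. By Lemma~\ref{lem:VirtuallyCyclic}, the germ homomorphism $\Phi\colon K \to \prod_{p\in S}(V_dG)_p$ takes values in a virtually abelian (in fact polycyclic) group, so $\Phi(K)$ is finitely generated; I would pick a finite subset $\Sigma_1\subseteq K$ whose $\Phi$-images generate $\Phi(K)$.

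Next, for each $p = \mu_p\overline{\nu_p} \in S$ (with $\nu_p$ not a proper power), Lemma~\ref{lem:MakeElements} produces $f_p \in V_dG$ satisfying $f_p(\mu_p\psi) = \mu_p\nu_p\psi$ for every $\psi$ and $f_p = \mathrm{id}$ on $\mu_q\C_d$ for each $q \in S\setminus\{p\}$. Then each $f_p$ lies in $K$. By assigning each $f_p$ a different, pairwise disjoint collection of auxiliary cones inside $W_0 := \C_d \setminus \bigsqcup_{p\in S}\mu_p\C_d$ to absorb the annular shift of $f_p$, I can arrange the $f_p$'s to pairwise commute. Set $\Sigma_2 := \{f_p : p \in S\}$, and let $P := \Fix_{V_dG}\bigl(\bigsqcup_{p\in S}\mu_p\C_d\bigr)$.

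The key combinatorial claim is then that $K = \langle \Sigma_1, \Sigma_2, P\rangle$. Given $k\in K$, I would multiply by a suitable word in $\Sigma_1$ to arrange $\Phi(k)=1$; then $k$ is the identity on some cone $\mu_p\nu_p^{N_p}\C_d$ about each $p\in S$. Conjugation by $f_p^{-1}$ carries a homeomorphism trivial on $\mu_p\nu_p^{i}\C_d$ to one trivial on the larger cone $\mu_p\nu_p^{i-1}\C_d$, while leaving the trivial regions at the other points undisturbed (since $f_p=\mathrm{id}$ on $\mu_q\C_d$ for $q\neq p$); applying the commuting product $\prod_{p\in S}f_p^{-N_p}$ by conjugation therefore yields an element
\[
k' := \Bigl(\prod_{p\in S} f_p^{-N_p}\Bigr)\,k\,\Bigl(\prod_{p\in S} f_p^{N_p}\Bigr)
\]
that is the identity on all of $\bigsqcup_{p\in S}\mu_p\C_d$, so $k'\in P$ and hence $k\in\langle\Sigma_2,P\rangle$.

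The main obstacle is to show that $P$ itself is finitely generated. Elements of $P$ act on the clopen set $W_0$, a finite disjoint union of cones, by tree-pair homeomorphisms with local actions in $G$, which makes $P$ a natural ``Higman--Thompson--Nekrashevych group on several roots'' associated to $G$. My plan is to show $P$ is finitely generated by identifying it, perhaps up to finite index depending on the residue of the number of component cones of $W_0$ modulo $d-1$, with $V_dG$ itself via a suitable cone-relabelling homeomorphism $W_0 \to \C_d$; finite generation then follows from Nekrashevych's finite presentation of $V_dG$. All the other ingredients are routine manipulations of Lemmas~\ref{lem:MakeElements} and~\ref{lem:VirtuallyCyclic}, so relating $P$ back to $V_dG$ is really the only technical step.
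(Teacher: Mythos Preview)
Your approach matches the paper's: reduce to the pointwise fixator, map to the product of germ groups via Lemma~\ref{lem:VirtuallyCyclic}, use dilators to push the kernel into the fixator $P$ of a clopen neighbourhood $E=\bigsqcup_p\mu_p\C_d$, then identify $P$ with $V_dG$ and invoke Nekrashevych. The paper streamlines two of your steps. First, it uses a single dilator $f$ with $f(\mu_p\psi)=\mu_p\nu_p\psi$ for all $p$ simultaneously, so no commuting family $\{f_p\}$ is needed. Second, for the mod $d{-}1$ obstruction it does \emph{not} invoke a finite-index relation between $P$ and $V_dG$ (none is evident); instead it deletes a few extra cones, disjoint from $S$, from $E$ to obtain $E'\subseteq E$ whose complement has $\equiv 1\pmod{d-1}$ cones, so that $P=\Fix_H(E)\subseteq\Fix_H(E')\subseteq K$ and $\Fix_H(E')\cong V_dG$ exactly. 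Your ``up to finite index'' phrasing is the one genuinely loose point; replacing it with this containment-in-a-larger-fixator trick closes the argument.
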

\begin{proof}
Let $H=V_dG$ and let $\Fix_{H}(S) = \bigcap_{p\in S} \Stab_{H}(p)$ be the pointwise stabilizer of $S$.  This has finite index in $\Stab_{H}(S)$, so it suffices to prove that $\Fix_{H}(S)$ is finitely generated.

Let $\pi\colon \Fix_{H}(S) \to \prod_{p\in S} (H)_p$ be the homomorphism whose components are the quotient homomorphisms $\Stab_{H}(p)\to (H)_p$ for $p\in S$.   Since each $(H)_p$ is virtually cyclic by Lemma~\ref{lem:VirtuallyCyclic}, the image of $\pi$ is finitely generated,
so it suffices to prove that $\ker(\pi)=\bigcap_{p\in S} \Fix_{H}^0(p)$ is contained in some finitely generated subgroup of~$\Fix_{H}(S)$ (since then $\Fix_H(S)$ is generated by this subgroup and lifts of the generators of $\text{im}(\pi)$.) Note that this kernel consists precisely of those elements of $H$ that are the identity in a neighborhood of~$S$.

Write $S=\{p_1,\ldots,p_n\}$, where each $p_i=\alpha_i\overline{\beta_i}$ for some $\alpha_i,\beta_i\in X_d^+$.  Extending each $\alpha_i$ if necessary, we may assume that the cones $\alpha_1\C_d,\ldots,\alpha_n\C_d$ are pairwise disjoint and their union $E=\alpha_1\C_d\cup \cdots \cup \alpha_n\C_d$ is not all of~$\C_d$.  By Lemma~\ref{lem:MakeElements}, there exists an element $f\in H$ such that $f(\alpha_i\C_d)=\alpha_i\beta_i\C_d$ and $f|_{\alpha_i}=\mathrm{id}$ for each~$i$. Note that $f\in \Fix_H(S)$.  Furthermore, $E\supset f(E)\supset f^2(E)\supset \cdots$, and every open neighborhood of $S$ contains $f^k(E)$ for sufficiently large~$k$.  It follows that $\ker(\pi)=\bigcup_{i=0}^\infty f^i\Fix_H(E)f^{-i}$, so it suffices to prove that $\Fix_H(E)$ is contained in some finitely generated subgroup of~$\Fix_H(S)$.

Since $E$ is clopen, its complement $\C_d\setminus E$ can be written as a disjoint union of cones $\gamma_1\C_d,\ldots,\gamma_m\C_d$.  Let $k\in \N$ so that $m+k\equiv 1\;(\mathrm{mod}\;d-1)$, and choose pairwise disjoint cones $\gamma_{m+1}\C_d,\ldots,\gamma_{m+k}\C_d$ which are contained in $E$ but disjoint from~$S$.  Let $E'=\C_d\setminus (\gamma_1\C_d\cup \cdots \cup \gamma_{m+k}\C_d) = E\setminus (\gamma_{m+1}\C_d\cup \cdots \cup \gamma_{m+k}\C_d)$.  Then $\Fix_H(E)\subseteq \Fix_H(E')\subseteq \Fix_H(S)$, so it suffices to prove that $\Fix_H(E')$ is finitely generated.  Indeed, we will prove that $\Fix_H(E')\cong V_dG$, which is finitely generated by Nekrashevych's theorem~\cite{Nek2}.

Since $m+k\equiv 1\;(\mathrm{mod}\;d-1)$, there exists a partition of $\C_d$ into $m+k$ cones $\delta_1\C_d,\ldots,\delta_{m+k}\C_d$. Let $z\colon \C_d\setminus E'\to \C_d$ be the homeomorphism that maps each $\gamma_i\C_d$ to $\delta_i\C_d$ by the function $\gamma_i\psi\mapsto \delta_i\psi$. 

For each $h\in V_dG$, let $\varphi(h)$ be the element of $V_dG$ which is the identity on $E'$ and agrees with $z^{-1}hz$ on $\C_d\setminus E'$.  Then $\varphi\colon V_dG\to \Fix_H(E')$ is the desired isomorphism.  We conclude that $\Fix_H(E')$ is finitely generated, and therefore $\Fix_H(S)$ is finitely generated.
\end{proof}

\bigskip
\newcommand{\arXiv}[1]{\href{https://arxiv.org/abs/#1}{arXiv}}
\newcommand{\doi}[1]{\href{https://doi.org/#1}{Crossref\,}}
\bibliographystyle{plain}

\end{document}